\newtheorem{thm}{Theorem}[section]
\newtheorem{lemma}[thm]{Lemma}
\newtheorem{definition}[thm]{Definition}
\begin{document}

\title{The Probability Distribution for Draws Until First Success Without Replacement}

\author{John Ahlgren $<$ahlgren@ee.cityu.edu.hk$>$}

\maketitle

\begin{abstract}
We consider the urn setting with two different objects, ``good'' and ``bad'', and analyze the number of draws without replacement until a good object is picked. Although the expected number of draws for this setting is a standard
textbook exercise, we compute the variance, and show that this distribution converges to the geometric distribution.
\end{abstract}

\section{Introduction}
Consider the setting where we draw from an urn containing two types of objects, ``good'' and ``bad''.
There are in total $N$ objects, of which $K$ are good.

Most instances of this setting are well known \cite{probability-theory-book}: if we wish to obtain the number of good draws among $n$ many, use the binomial distribution for draws with replacement. Without replacement, we instead use the hypergeometric distribution.
If, on the other hand, we wish to consider the number of draws until we draw a good object, we use the geometric distribution for draws with replacement. But what about draws without replacement? 
The situation is depicted in Table~\ref{tab:classification}.

It is not too hard to show that the expected number of draws in this case is $(N+1)/(K+1)$, but what about the variance?
Also, intuitively, such a distribution should be approximated by a geometric distribution if $N$ is much larger than $K$, as
the replacements will matter less.

\begin{table}
\centering{}
\begin{tabular}{|l|l|l|} 
\hline
Setting & Counting Solutions & Until First Solution \\
\hline
Replacements & Binomial & Geometric \\
No Replacements & Hypergeometric & ? \\
\hline
\end{tabular}
\caption{Classification of Distributions.}
\label{tab:classification}
\end{table}

In Section~\ref{sec:distribution}, we present the relevant probability mass function, cumulative function distribution,
expected value, variance, and a convergence result.
Longer proofs are given in the appendices. Section~\ref{sec:conclusions} concludes the paper.

\section{The Probability Distribution} \label{sec:distribution}
Consider an urn containing $N$ objects, of which $K$ are considered ``good'' and the remaining ``bad''.
The probability of picking a good object on the first trial $n=1$ is clearly $P(1) = \frac{K}{N}$.
For the second pick, we must have failed once and then succeeded: $P(2) = \frac{N-K}{N} \frac{K}{N-1}$.
To obtain the general form, we begin with an expression for the first $n$ failures.
\begin{thm} \label{theorem:failn}
The probability that our first $n$ picks are bad candidates (assuming independence) is 
\begin{equation} \label{eq:fail-ratio}
\frac{(N-K)(N-K-1)\ldots (N-K-n+1)}{N(N-1)\ldots (N-n+1)}
\end{equation}
which is well defined for $n=1,2,\ldots,(N-K)$.
\end{thm}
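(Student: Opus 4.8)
The plan is to decompose the event ``the first $n$ picks are all bad'' into a chain of successive draws and apply the multiplication rule for conditional probabilities. Write $F_i$ for the event that the draw on trial $i$ produces a bad object, so that the quantity to be computed is $P(F_1 \cap F_2 \cap \cdots \cap F_n)$. By the chain rule this factors as
\begin{equation}
P(F_1 \cap \cdots \cap F_n) = P(F_1)\, P(F_2 \mid F_1) \cdots P(F_n \mid F_1 \cap \cdots \cap F_{n-1}),
\end{equation}
so the whole argument reduces to evaluating each conditional factor and collecting the product.

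Next I would evaluate the factors by tracking the composition of the urn. Initially there are $N-K$ bad objects among $N$, so $P(F_1) = (N-K)/N$. Conditioned on the first $i-1$ draws all being bad, exactly $i-1$ bad objects have been removed without replacement, leaving $N-K-(i-1)$ bad objects in an urn of $N-(i-1)$ objects; hence $P(F_i \mid F_1 \cap \cdots \cap F_{i-1}) = (N-K-i+1)/(N-i+1)$. Substituting these into the product gives
\begin{equation}
\prod_{i=1}^{n} \frac{N-K-i+1}{N-i+1} = \frac{(N-K)(N-K-1)\cdots(N-K-n+1)}{N(N-1)\cdots(N-n+1)},
\end{equation}
which is exactly \eqref{eq:fail-ratio}. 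One could equally phrase this as an induction on $n$, with base case $n=1$ and the inductive step supplied by the single conditional factor above, or as a purely combinatorial count of equally likely ordered draw sequences (a falling factorial in the numerator over a falling factorial in the denominator).

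The main point to get right is the range of validity rather than any hard computation. The factorization is only meaningful while every conditioning event has positive probability, i.e.\ while at least one bad object remains to be drawn; this forces $N-K-(n-1) \ge 1$, equivalently $n \le N-K$, matching the stated range. At the extreme $n = N-K$ the final factor equals $1$ and the expression returns the probability of exhausting every bad object before any good one, while for $n > N-K$ the event is impossible and the formula correctly degenerates as the numerator acquires a zero factor. I would also remark that the phrase ``assuming independence'' in the statement is best read as referring to this sequential without-replacement process: the draws are of course dependent, and the conditional probabilities computed above are precisely what encode that dependence.
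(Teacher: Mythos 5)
Your proof is correct and takes essentially the same route as the paper: track the urn's composition after each bad draw (leaving $N-K-(i-1)$ bad objects among $N-(i-1)$) and multiply the per-step failure probabilities. In fact, your justification of the product via the chain rule of conditional probability is more careful than the paper's own wording, which appeals to the steps being ``independent'' --- as you rightly note, the draws are dependent, and the multiplication is valid precisely because the factors are conditional probabilities, not because of independence.
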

\begin{proof}
At step $k$, $k-1$ bad picks have been removed.
Thus there are $N-K-(k-1)$ bad picks left and a total of $N-(k-1)$ picks left.
The probability of failure is therefore $(N-K-k+1)/(N-k+1)$.
There are $n$ failure steps, $k=1,2,\ldots,n$, and since each step is independent, the result is obtained.
Note that we can only fail at most $N-K$ times, as that is the total number of bad picks.
\end{proof}

By expressing Equation~\ref{eq:fail-ratio} as a ratio of binoms, we define the Fail-function.
\begin{definition}
The Fail-function is defined as
\begin{equation}
Fail(n) = \frac{\binom{N-n}{K}}{\binom{N}{K}}
\end{equation}
for any integers $K$, $N$, and $n$.
\end{definition}
Due to Theorem~\ref{theorem:failn}, this corresponds to the probability of failure for $n=1,2,\ldots,(N-K)$.
Thus the Fail-function extends expression~\ref{eq:fail-ratio} so that it is $0$ when $n > N-K$ and $1$ for $n=0$.

Since the probability of success at $n$ is simply $n-1$ failures followed by a success, we can derive a closed
form for the probability mass function.
\begin{thm}[Probability Mass Function] \label{theorem:pmf}
The probability mass function (pmf) is:
\begin{equation} \label{eq:pmf}
P(n) = Fail(n-1) \frac{K}{N-n+1} = 
\frac{\binom{N-n+1}{K}}{\binom{N}{K}} \frac{K}{N-n+1}.
\end{equation}
\end{thm}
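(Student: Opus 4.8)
The plan is to exploit the conditional structure of the experiment already indicated in the text: the event that the first success occurs exactly on draw $n$ is precisely the event that the first $n-1$ draws are all bad \emph{and} the $n$-th draw is good. Since the process is a sequence of draws without replacement, I would write $P(n)$ as a product of the probability of the initial failure run and the conditional probability of success on the final draw, and then substitute the binomial closed form for the failure run to obtain the second equality.

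Concretely, I would first invoke Theorem~\ref{theorem:failn} together with the definition of the Fail-function to identify the probability that the first $n-1$ draws are all bad as $Fail(n-1)$. The edge cases are handled automatically: for $n=1$ we have $Fail(0)=\binom{N}{K}/\binom{N}{K}=1$, reflecting that no failures are required, and for $n-1>N-K$ the convention that $Fail$ vanishes forces $P(n)=0$, consistent with the support ending at $n=N-K+1$. Second, I would compute the conditional probability of drawing good on step $n$ given that the first $n-1$ draws were bad: under this conditioning, exactly $n-1$ bad objects have been removed and all $K$ good objects remain, so the urn holds $N-(n-1)=N-n+1$ objects of which $K$ are good, yielding a conditional success probability of $K/(N-n+1)$.

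Multiplying these two factors gives the first equality $P(n)=Fail(n-1)\,K/(N-n+1)$. For the second equality I would merely substitute the binomial form of the Fail-function with argument $n-1$, namely $Fail(n-1)=\binom{N-(n-1)}{K}/\binom{N}{K}=\binom{N-n+1}{K}/\binom{N}{K}$, which is a direct application of the definition.

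The calculation itself is short; the main obstacle is conceptual rather than computational, namely making the conditioning step rigorous. I would need to argue carefully that, conditioned on the run of $n-1$ bad draws, the $K$ good objects remain untouched, so that the count of good objects is deterministically $K$ and the total is deterministically $N-n+1$. This independence from \emph{which} bad objects were drawn is exactly what justifies the single factor $K/(N-n+1)$. Once that observation is made explicit, the product decomposition and the final substitution are immediate.
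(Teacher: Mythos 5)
Your proposal is correct and follows essentially the same route as the paper's proof: decompose the event into $n-1$ failures followed by a success, identify the failure run with $Fail(n-1)$ via Theorem~\ref{theorem:failn}, and multiply by the conditional success probability $K/(N-n+1)$ since $N-n+1$ objects (all $K$ good ones among them) remain. Your added care in justifying the conditioning step and in handling the edge cases ($n=1$ and $n-1>N-K$) only makes explicit what the paper leaves implicit.
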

\begin{proof}
We must fail $n-1$ times and then succeed on our $n$th time. Using Theorem~\ref{theorem:failn} and noting that after having failed $n-1$ times
there are $N-(n-1) = N-n+1$ candidates left, we obtain the expression for $n \geq 2$.
For $n=1$, $\mathrm{Fail}(0) = 1$ and we obtain the correct probability $K/N$.
\end{proof}

To obtain a closed form for the cumulative distribution function, we take another approach.
\begin{thm}[Cumulative Distribution Function] \label{theorem:cdf}
The cumulative distribution function (cdf) is:
\begin{equation} \label{eq:cdf}
F(n) = \sum_{k=1}^n P(k) = 1 - Fail(n) = 1 - \frac{\binom{N-n}{K}}{\binom{N}{K}}
\end{equation}
\end{thm}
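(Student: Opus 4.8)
The plan is to avoid summing the pmf directly and instead argue through the complementary event, which is presumably the ``another approach'' the authors have in mind. The cdf $F(n)$ is the probability that the first good draw occurs on or before draw $n$; equivalently, it is one minus the probability that the first $n$ draws are \emph{all} bad. By Theorem~\ref{theorem:failn}, together with the Fail-function that extends expression~\ref{eq:fail-ratio}, this latter probability is exactly $Fail(n) = \binom{N-n}{K}/\binom{N}{K}$. Hence $F(n) = 1 - Fail(n)$, which is the claimed closed form. This is essentially a one-line probabilistic argument, and it is the route I would write up as the main proof.

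As a second route that verifies the same identity directly through the pmf, I would show that the summand telescopes: I claim $P(k) = Fail(k-1) - Fail(k)$. Using the pmf from Theorem~\ref{theorem:pmf}, this reduces to showing $\binom{N-k+1}{K}\frac{K}{N-k+1} = \binom{N-k+1}{K} - \binom{N-k}{K}$, and the right-hand difference simplifies to $\binom{N-k}{K-1}$ by Pascal's rule. A short manipulation of factorials confirms that both sides equal $\binom{N-k}{K-1}/\binom{N}{K}$. Granting this, the sum collapses as $\sum_{k=1}^n P(k) = \sum_{k=1}^n \bigl[Fail(k-1) - Fail(k)\bigr] = Fail(0) - Fail(n) = 1 - Fail(n)$, since $Fail(0)=1$.

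The main obstacle here is bookkeeping rather than any genuine difficulty. In the complementary-event approach, the only subtlety is justifying that the Fail-function value is literally the probability of $n$ consecutive failures across the full admissible range, including the boundary cases where $n > N-K$ forces $Fail(n)=0$ (certain success within $n$ draws); but this is already secured by the extension noted immediately after the definition of $Fail$. In the telescoping approach, the one step that must be executed with care is the Pascal's-rule identity $\binom{N-k+1}{K} - \binom{N-k}{K} = \binom{N-k}{K-1}$ together with the matching cancellation of the factor $K/(N-k+1)$; once that is in place the telescope collapses immediately and no further estimation is needed.
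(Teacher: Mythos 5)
Your main argument---that $F(n)$ is one minus the probability that all of the first $n$ draws are bad, which by Theorem~\ref{theorem:failn} and the extension built into the Fail-function equals $Fail(n)$---is exactly the paper's proof, stated with the same one-line complementary-event reasoning. Your secondary telescoping verification, $P(k) = Fail(k-1) - Fail(k)$ via Pascal's rule, is also correct but is extra material the paper does not need.
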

\begin{proof}
The cmf is the probability of succeeding at any of the first $n$ picks, which is equivalent to not failing
on any of the $n$ picks.
\end{proof}

Now that we have obtained the pmf and cdf, we are ready to present all results.
For the remainder of this paper, let $X$ be a random variable with pmf as in Equation~\ref{eq:pmf}, that is,
$X$ is the number of picks until we find a good enough solution.
\begin{thm}[Expected Value] \label{theorem:expected-value}
\begin{equation}
\mathrm{E}(X) = \frac{N+1}{K+1}.
\end{equation}
\end{thm}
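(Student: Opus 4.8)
The plan is to avoid evaluating $\sum_n n\,P(n)$ directly and instead exploit the tail-sum formula for a non-negative integer-valued random variable. Since $X$ takes values in $\{1,2,\ldots,N-K+1\}$, we have $\mathrm{E}(X) = \sum_{n=0}^{\infty} P(X > n)$. The payoff of this route is that the event $\{X > n\}$ is precisely the event that the first $n$ draws are all bad, so $P(X > n) = \mathrm{Fail}(n)$ by Theorem~\ref{theorem:failn} together with the definition of the Fail-function (and correctly $P(X>0)=\mathrm{Fail}(0)=1$, consistent with $X \geq 1$). Because $\mathrm{Fail}(n) = 0$ for $n > N-K$, the infinite sum truncates, giving $\mathrm{E}(X) = \sum_{n=0}^{N-K} \mathrm{Fail}(n) = \binom{N}{K}^{-1} \sum_{n=0}^{N-K} \binom{N-n}{K}$.

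Next I would reindex the remaining binomial sum by setting $m = N-n$, turning it into $\sum_{m=K}^{N} \binom{m}{K}$. The key combinatorial step is then the hockey-stick identity $\sum_{m=K}^{N} \binom{m}{K} = \binom{N+1}{K+1}$, which collapses the sum to a single binomial coefficient and reduces the expectation to the ratio $\mathrm{E}(X) = \binom{N+1}{K+1} / \binom{N}{K}$. To finish, I would expand both coefficients into factorials and cancel: every $(N-K)!$ and the bulk of the remaining factorial terms drop out, leaving $(N+1)/(K+1)$, as claimed.

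The main obstacle is genuinely confined to the first paragraph, namely the observation that $P(X>n)$ coincides with the already-established Fail-function; once that identification is made, the rest is mechanical. The alternative of attacking $\sum_n n\,P(n)$ head-on is possible but unpleasant, since the factor $K/(N-n+1)$ does not telescope cleanly against the weight $n$. The tail-sum reformulation is precisely what makes the hockey-stick identity directly applicable and keeps the computation short.
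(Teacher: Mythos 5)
Your proof is correct, but it takes a genuinely different route from the paper. The paper attacks $\sum_n n\,P(n)$ head-on: it reindexes, expands the factor $(N+1-j)$, applies the absorption identity $\binom{a}{b} = \frac{a}{b}\binom{a-1}{b-1}$, and then invokes the hockey-stick identity (the paper's Lemma~\ref{lemma:binom}) \emph{twice}, once for each of the two resulting sums, before simplifying. You instead use the tail-sum formula $\mathrm{E}(X) = \sum_{n\geq 0} P(X>n)$ and observe that $P(X>n) = \mathrm{Fail}(n)$ --- which is exactly the complement of the paper's own cdf, Theorem~\ref{theorem:cdf}, so this identification is already established rather than being an obstacle. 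That reduces the whole computation to a single application of the same hockey-stick identity plus one factorial cancellation, with no absorption identity and no algebraic expansion. Your version is shorter and arguably more conceptual: it explains \emph{why} the hockey-stick sum appears (it is the sum of the survival function). What the paper's heavier machinery buys is reusability: the identical pattern (reindex, expand, absorb, apply the binomial lemmas) is what Appendix~\ref{sec:variance-proof} then runs for $\sum_n n^2 P(n)$ to get the variance, where a naive tail-sum trick does not apply as immediately; so the direct computation serves as a template for the second moment, while yours is optimized for the first.
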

\begin{proof}
See Appendix~\ref{sec:expected-proof}.
\end{proof}

The median does not have a nice closed form solution, but we can express it as a maximization problem using
the cdf.
\begin{thm}[Median] \label{theorem:median}
The median is the smallest integer $m$ such that
\begin{equation}
\binom{N-m}{K} \leq \frac{1}{2} \binom{N}{K}.
\end{equation}
\end{thm}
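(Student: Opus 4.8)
The plan is to reduce the statement directly to the cumulative distribution function obtained in Theorem~\ref{theorem:cdf}. I would begin by recalling the standard definition of the median of a discrete random variable $X$: it is the smallest integer $m$ for which $F(m) \geq \tfrac{1}{2}$, where $F$ is the cdf. Since $X$ takes values in $\{1, 2, \ldots, N-K+1\}$ and $F$ is nondecreasing, this definition picks out a unique integer, so there is no ambiguity in the convention to worry about beyond stating it.

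Next I would substitute the closed form $F(m) = 1 - \binom{N-m}{K}/\binom{N}{K}$ from Theorem~\ref{theorem:cdf} into the defining inequality $F(m) \geq \tfrac{1}{2}$ and rearrange. This gives
\begin{equation}
1 - \frac{\binom{N-m}{K}}{\binom{N}{K}} \geq \frac{1}{2}
\quad \Longleftrightarrow \quad
\frac{\binom{N-m}{K}}{\binom{N}{K}} \leq \frac{1}{2}
\quad \Longleftrightarrow \quad
\binom{N-m}{K} \leq \frac{1}{2}\binom{N}{K},
\end{equation}
which is precisely the stated condition. The algebra itself is routine; the only genuine content is verifying that the qualifier ``smallest $m$'' is preserved under these equivalences.

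This last point is where I would be most careful, and it is the main (though modest) obstacle. Moving the ratio across the inequality reverses its direction, so I must confirm that the set of integers satisfying the final inequality is closed upward, ensuring that ``smallest satisfying integer'' is the correct notion on both sides. This holds because $Fail(m) = \binom{N-m}{K}/\binom{N}{K}$ is nonincreasing in $m$ — equivalently, $F(m)$ is nondecreasing, being a cdf — so once the inequality is satisfied it remains satisfied for all larger $m$. Consequently the smallest $m$ with $F(m) \geq \tfrac{1}{2}$ coincides with the smallest $m$ with $\binom{N-m}{K} \leq \tfrac{1}{2}\binom{N}{K}$, which completes the argument. In short, the proof is essentially a substitution and a sign-chase, with the one subtlety being the monotonicity check that reconciles the two ``smallest $m$'' formulations.
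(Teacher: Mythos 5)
Your proof is correct and follows exactly the same route as the paper, which simply states that the result is ``Immediate from Equation~\ref{eq:cdf}''; you have just spelled out the substitution $F(m) = 1 - \binom{N-m}{K}/\binom{N}{K} \geq \tfrac{1}{2}$ and the monotonicity check that the paper leaves implicit. The extra care about the ``smallest $m$'' qualifier is a reasonable elaboration, not a different approach.
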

\begin{proof}
Immediate from Equation~\ref{eq:cdf}.
\end{proof}

The mode gives the most likely outcome.
\begin{thm}[Mode] \label{theorem:mode}
$\mathrm{MODE}(X) = 1$ if $K > 1$.
If $K = 1$, $P(n)$ has fixed probability $1/N$ for $n=1,2,\ldots,(N-K+1)$.
\end{thm}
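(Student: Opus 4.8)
The plan is to reduce the pmf to a single binomial coefficient and then determine whether $P$ increases or decreases in $n$ by examining the ratio of consecutive terms. First I would simplify Equation~\ref{eq:pmf}. Writing $m = N-n+1$ and using the elementary identity $\binom{m}{K}\frac{K}{m} = \binom{m-1}{K-1}$, the factor $K/(N-n+1)$ collapses the top binomial, yielding the clean closed form
\begin{equation}
P(n) = \frac{\binom{N-n}{K-1}}{\binom{N}{K}}.
\end{equation}
This holds across the support $n = 1, \ldots, N-K+1$, and as a sanity check it reproduces $P(1) = \binom{N-1}{K-1}/\binom{N}{K} = K/N$.

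Next I would analyze the ratio of successive terms. A direct cancellation of factorials gives
\begin{equation}
\frac{P(n+1)}{P(n)} = \frac{\binom{N-n-1}{K-1}}{\binom{N-n}{K-1}} = \frac{N-n-K+1}{N-n} = 1 - \frac{K-1}{N-n},
\end{equation}
valid wherever both terms are positive, i.e. $N-n \geq K-1 \geq 0$. This single expression controls the entire shape of the distribution, so the theorem will follow by splitting on the sign of $K-1$.

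From here the two cases fall out. If $K > 1$ then $K-1 > 0$ and $N-n > 0$ on the support, so the ratio is strictly less than $1$ for every valid $n$; hence $P$ is strictly decreasing and attains its maximum at $n = 1$, giving $\mathrm{MODE}(X) = 1$. If $K = 1$ the ratio equals $1$ for all $n$, so $P$ is constant; evaluating the closed form directly gives $P(n) = \binom{N-n}{0}/\binom{N}{1} = 1/N$ over the support $n = 1, \ldots, N-K+1$, which when $K=1$ is exactly $n = 1, \ldots, N$.

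I do not anticipate a serious obstacle: the only real work is the binomial simplification in the first step, and the one point needing care is tracking the support so that no denominator vanishes and the ratio identity is applied only where both $P(n)$ and $P(n+1)$ are nonzero. In particular, at the right endpoint $n = N-K+1$ one has $P(n+1) = 0$, which is consistent with the strictly decreasing behaviour established for $K > 1$.
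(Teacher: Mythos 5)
Your proof is correct and takes essentially the same route as the paper: both arguments reduce to computing the ratio of consecutive pmf values, namely $P(n)/P(n+1) = \frac{N-n}{N-n+1-K}$ in the paper versus your reciprocal $1 - \frac{K-1}{N-n}$, and then splitting on whether $K>1$ (strictly decreasing, mode $1$) or $K=1$ (constant, uniform $1/N$). Your preliminary simplification $P(n) = \binom{N-n}{K-1}/\binom{N}{K}$ and the care with the support endpoint are nice touches but do not change the substance of the argument.
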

\begin{proof}
See Appendix~\ref{sec:mode-proof}.
\end{proof}
When $K > 1$, both picking with and without replacement are most likely to find a good solution in the first pick.
Interestingly, for $K=1$, we obtain a uniform distribution with $P(n) = 1/N$.

The variance gives the predictability of performance.
\begin{thm}[Variance] \label{theorem:variance}
\begin{equation}
\mathrm{VAR}(X) = \frac{K(N-K)(N+1)}{(K+2)(K+1)^2}
\end{equation}
\end{thm}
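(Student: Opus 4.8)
The plan is to use the standard decomposition $\mathrm{VAR}(X) = \mathrm{E}(X^2) - \mathrm{E}(X)^2$. Since Theorem~\ref{theorem:expected-value} already gives $\mathrm{E}(X) = (N+1)/(K+1)$, the only genuinely new quantity is the second moment $\mathrm{E}(X^2) = \sum_{n=1}^{N-K+1} n^2 P(n)$.

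First I would rewrite the pmf of Equation~\ref{eq:pmf} as a single binomial coefficient. Absorbing the factor $K/(N-n+1)$ into $\binom{N-n+1}{K}$ collapses the expression to
\begin{equation}
P(n) = \frac{\binom{N-n}{K-1}}{\binom{N}{K}}, \qquad n = 1, 2, \ldots, N-K+1,
\end{equation}
which is far more convenient for summation. The engine of the whole computation is then the Vandermonde (hockey-stick) identity
\begin{equation}
\sum_{n=1}^{N-K+1} \binom{n-1}{j} \binom{N-n}{K-1} = \binom{N}{K+j},
\end{equation}
obtained by reindexing $n \mapsto n-1$ and applying the usual convolution; the case $j=0$ recovers normalization.

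To bring $n^2$ into a form the identity can digest, I would expand it in the binomial basis $\binom{n-1}{0}, \binom{n-1}{1}, \binom{n-1}{2}$. A one-line computation gives $n^2 = 2\binom{n-1}{2} + 3\binom{n-1}{1} + \binom{n-1}{0}$. Summing term by term, dividing by $\binom{N}{K}$, and using the ratios $\binom{N}{K+1}/\binom{N}{K} = (N-K)/(K+1)$ and $\binom{N}{K+2}/\binom{N}{K} = (N-K)(N-K-1)/[(K+1)(K+2)]$ yields
\begin{equation}
\mathrm{E}(X^2) = \frac{2(N-K)(N-K-1)}{(K+1)(K+2)} + \frac{3(N-K)}{K+1} + 1.
\end{equation}

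The hard part will be the final algebra: subtracting $\mathrm{E}(X)^2 = (N+1)^2/(K+1)^2$ and simplifying. I would place everything over the common denominator $(K+1)^2(K+2)$; the task is then to verify that the resulting numerator, a sum of several degree-three polynomials in $N$ and $K$, collapses exactly to $K(N-K)(N+1)$. This cancellation is mechanical but error-prone, since the terms independent of $N-K$ must vanish and the remainder must factor cleanly. Writing $M = N-K$ and grouping by powers of $M$ makes the collapse transparent, giving numerator $KM^2 + K(K+1)M = KM(M+K+1) = K(N-K)(N+1)$ and hence the claimed variance.
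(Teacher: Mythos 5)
Your proof is correct, but it takes a genuinely different route from the paper's. The paper keeps the factor $K/(N-n+1)$ explicit, reindexes the second moment as $\frac{1}{\binom{N}{K}}\sum_{j=K}^{N}(N+1-j)^2\binom{j-1}{K-1}$, expands the square in powers of $j$, and then needs three auxiliary summation lemmas (Lemma~\ref{lemma:binom}, Lemma~\ref{lemma:binom1p}, Lemma~\ref{lemma:binom2}), the last of which, $\sum_{j=K}^N j\binom{j}{K} = N\binom{N+1}{K+1}-\binom{N+1}{K+2}$, requires its own sum-transposition argument. You instead collapse the pmf to the single coefficient $P(n)=\binom{N-n}{K-1}/\binom{N}{K}$, expand $n^2$ in the binomial basis as $n^2 = 2\binom{n-1}{2}+3\binom{n-1}{1}+\binom{n-1}{0}$ (which checks out), and let one Vandermonde convolution, $\sum_{n=1}^{N-K+1}\binom{n-1}{j}\binom{N-n}{K-1}=\binom{N}{K+j}$, do all the work for $j=0,1,2$. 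This is more economical: the binomial basis is exactly adapted to the convolution identity, so each term collapses in a single step and nothing like the paper's Lemma~\ref{lemma:binom2} is ever needed. What the paper's route buys in exchange is reuse: its lemmas extend the machinery already built for the expected value in Appendix~\ref{sec:expected-proof}, so the marginal effort within the paper is small, whereas your argument stands alone. Your closing algebra is also right: writing $M=N-K$ and putting everything over $(K+1)^2(K+2)$, the numerator reduces to $KM^2+K(K+1)M = KM(M+K+1) = K(N-K)(N+1)$, confirming the stated variance.
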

\begin{proof}
See Appendix~\ref{sec:variance-proof}.
\end{proof}
In the special case where $K=1$, we have the variance for a uniform distribution, $(N^2-1)/12$.

Finally, we show that drawing without replacement can in some sense be approximated by its replacement counterpart as $N$
grows and the proportion of good solutions is held fixed.
\begin{thm} \label{theorem:distconverge}
Let $K/N$ be fixed. Let $Y$ be a random variable modeling draws with replacement.
$X \overset{d}{\longrightarrow} Y$ as $N \longrightarrow \infty$.
\end{thm}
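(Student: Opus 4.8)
The plan is to establish convergence in distribution by proving pointwise convergence of the distribution functions at every fixed integer. First I would identify the limiting random variable $Y$. Drawing \emph{with} replacement, each draw independently succeeds with probability $p = K/N$, so $Y$ follows the geometric distribution with $P(Y=n) = (1-p)^{n-1} p$ and cumulative distribution $P(Y \le n) = 1 - (1-p)^n$. Since $X$ and $Y$ are both supported on the positive integers, a standard fact for lattice random variables applies: $X \overset{d}{\longrightarrow} Y$ holds if and only if $F_X(n) \to F_Y(n)$ for every fixed integer $n$. Indeed, $F_Y$ is continuous at every non-integer point and both cumulative distribution functions are constant between consecutive integers, so convergence on the integer lattice forces convergence at all continuity points of $F_Y$, which is exactly weak convergence.

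The key computation uses the cumulative distribution function of Theorem~\ref{theorem:cdf}, namely $F(n) = 1 - Fail(n)$, together with the product form of $Fail(n)$ recorded in Equation~\ref{eq:fail-ratio}. Writing $K = pN$ and holding $p$ fixed, I would express
\[
Fail(n) = \prod_{j=0}^{n-1} \frac{N - K - j}{N - j} = \prod_{j=0}^{n-1} \frac{N(1-p) - j}{N - j}.
\]
For each fixed $j$, the factor $\frac{N(1-p) - j}{N - j} \to 1 - p$ as $N \to \infty$. Because $n$ is held fixed, this is a finite product of finitely many convergent factors, so $Fail(n) \to (1-p)^n$. Hence $F(n) = 1 - Fail(n) \to 1 - (1-p)^n = F_Y(n)$, which is precisely the geometric distribution function, and the convergence criterion above is met.

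I expect the main difficulty to be bookkeeping rather than depth, and two points deserve explicit care. First, holding $K/N$ \emph{exactly} fixed forces $N$ to range over a subsequence (multiples of the denominator of $p$) so that $K = pN$ stays an integer; the cleaner and more general hypothesis is simply $K/N \to p$, under which each factor still tends to $1-p$ and the argument is unchanged. Second, one must not skip the justification that pointwise convergence of the lattice distribution functions genuinely delivers weak convergence, as spelled out in the first paragraph. As a sanity check, the identical limit drops out directly from the pmf of Theorem~\ref{theorem:pmf}: since $Fail(n-1) \to (1-p)^{n-1}$ and $\frac{K}{N-n+1} = \frac{pN}{N-n+1} \to p$, we get $P(X=n) \to (1-p)^{n-1} p$, the geometric mass function.
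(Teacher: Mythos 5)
Your proposal is correct and rests on exactly the same key step as the paper's proof: showing $Fail(n) \to (1-p)^n$ for each fixed $n$ as a finite product of factors each tending to $1-p$, with the limit identified as the geometric distribution. The only difference is packaging—the paper passes to the limit in the pmf, $P(n) = Fail(n-1)\bigl(1 - \tfrac{N-K-n+1}{N-n+1}\bigr) \to q^{n-1}(1-q)$, leaving the step from pointwise pmf convergence to convergence in distribution implicit, whereas you work with the cdf and explicitly invoke the lattice criterion for weak convergence (a point of rigor the paper skips); your closing ``sanity check'' via the pmf is precisely the paper's argument.
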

\begin{proof}
See Appendix~\ref{sec:convergence-proof}.
\end{proof}

\section{Conclusion} \label{sec:conclusions}
We have presented a probability distribution for number of draws until a first success is achieved among $K$ good
and $N-K$ bad objects. Although the expected value of such a distribution is already known, we have also
presented its variance and shown that the distribution converges towards a geometric distribution when $N$ grows and
the proportion of good solutions $K/N$ is held fixed.

\appendix
\section{Proof of Expected Value} \label{sec:expected-proof}
First we start with a lemma.
\begin{lemma} \label{lemma:binom}
\begin{displaymath}
\sum_{j=K}^{N} \binom{j}{K} = \binom{N+1}{K+1}.
\end{displaymath}
\end{lemma}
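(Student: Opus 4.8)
The plan is to prove this identity --- the classical ``hockey-stick'' identity --- by telescoping, using Pascal's rule. First I would rewrite each summand as a difference of two binomial coefficients one level up. Pascal's rule $\binom{j+1}{K+1} = \binom{j}{K+1} + \binom{j}{K}$ rearranges to
\begin{displaymath}
\binom{j}{K} = \binom{j+1}{K+1} - \binom{j}{K+1},
\end{displaymath}
so the summand at index $j$ is exactly the gap between consecutive values of the function $j \mapsto \binom{j}{K+1}$.

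Substituting this into the sum, the interior terms cancel in pairs and only the two boundary terms survive:
\begin{displaymath}
\sum_{j=K}^{N} \binom{j}{K} = \sum_{j=K}^{N} \left[ \binom{j+1}{K+1} - \binom{j}{K+1} \right] = \binom{N+1}{K+1} - \binom{K}{K+1}.
\end{displaymath}
I would then observe that $\binom{K}{K+1} = 0$, since one cannot choose $K+1$ objects from only $K$, which leaves precisely $\binom{N+1}{K+1}$ as claimed.

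There is no genuine obstacle here; the only step requiring care is the bookkeeping at the telescoping boundaries --- specifically confirming that the lower endpoint contributes the vanishing term $\binom{K}{K+1}$ rather than a nonzero correction, which is what makes the collapse exact. As an alternative I would note a direct combinatorial argument: $\binom{N+1}{K+1}$ counts the $(K+1)$-element subsets of $\{1, \ldots, N+1\}$, and partitioning these subsets according to their largest element $j+1$ (which ranges over $K+1, \ldots, N+1$) leaves $\binom{j}{K}$ choices for the remaining $K$ elements, recovering the sum term by term. A routine induction on $N$ with base case $N = K$ is a third option, but the telescoping proof is the cleanest and I would present it as the main argument.
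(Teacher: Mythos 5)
Your proof is correct, and it takes a different route from the paper. The paper proves the lemma by induction on $N$: the base case $N=K$ is trivial, and the inductive step applies Pascal's rule to absorb the new term $\binom{P+1}{K}$ into $\binom{P+1}{K+1}$. Your main argument is a telescoping sum: you rewrite each summand as $\binom{j+1}{K+1} - \binom{j}{K+1}$ (the same Pascal's rule, used pointwise rather than in an inductive step), collapse the sum, and kill the boundary term $\binom{K}{K+1}=0$. The two arguments rest on the identical algebraic fact and are in that sense cousins --- telescoping is induction unrolled --- but yours has a concrete payoff later in the paper: run with lower limit $x$ instead of $K$ and the same collapse immediately gives
\begin{displaymath}
\sum_{j=x}^{N} \binom{j}{K} = \binom{N+1}{K+1} - \binom{x}{K+1},
\end{displaymath}
which is exactly the paper's Lemma~\ref{lemma:binom1p}; the paper instead has to derive that generalization separately by splitting the sum and applying Lemma~\ref{lemma:binom} twice. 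Your combinatorial alternative (classifying $(K+1)$-subsets of $\{1,\ldots,N+1\}$ by largest element) is the most genuinely distinct of your three arguments and gives the identity with no algebra at all, though it does not extend as cleanly to the general lower limit. Incidentally, your version also sidesteps a blemish in the paper's inductive step, whose final expression reads $\binom{P+2}{K}$ where $\binom{P+2}{K+1}$ is clearly intended.
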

\begin{proof}
By induction over $N$. For $N=K$, the equality is trivial.
Now assume it is valid for $N=P$. Then, for $N=P+1$, using Pascal's triangle,
\begin{equation}
\sum_{j=K}^{P+1} \binom{j}{K} = \binom{P+1}{K+1} + \binom{P+1}{K} = \binom{P+2}{K}.
\end{equation}
\end{proof}

Using Lemma~\ref{lemma:binom}, we can obtain the expected value in closed form.
\begin{proof}[Proof of Expected Value]
First, reindex the sum and move out all factors independent of the summation index:
\begin{displaymath}
\sum_{n=1}^{N-K+1} nP(n) = \frac{K}{\binom{N}{K}} \sum_{j=K}^{N} \frac{N+1-j}{j} \binom{j}{K}.
\end{displaymath}
Expanding and using $\binom{a}{b} = \frac{a}{b} \binom{a-1}{b-1}$, we get two sums:
\begin{displaymath}
\frac{1}{\binom{N}{K}} \left( (N+1) \sum_{j=K}^{N} \binom{j-1}{K-1} - K \sum_{j=K}^{N} \binom{j}{K} \right)
\end{displaymath}
which, by Lemma~\ref{lemma:binom}, simplifies to
\begin{displaymath}
\frac{1}{\binom{N}{K}} \left( (N+1) \binom{N}{K} - K \binom{N+1}{K+1} \right).
\end{displaymath}
Simplifying, this yields the result.
\end{proof}

\section{Proof of Mode} \label{sec:mode-proof}
To prove Theorem~\ref{theorem:mode}, we prove that the probability mass function $P$ is
a decreasing function when $K > 1$ and constant when $K=1$.
\begin{thm}[Decreasing PMF]
Let $n=1,2,\ldots,(N-K+1)$.
For $K > 1$, $P(n) > P(n+1)$. 
For $K=1$, $P(n+1) = P(n) = 1/N$.
\end{thm}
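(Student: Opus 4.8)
The plan is to compare consecutive values of the pmf by forming the ratio $P(n+1)/P(n)$ and showing that it is strictly less than $1$ precisely when $K>1$, and equal to $1$ when $K=1$. This reduces the whole monotonicity claim to a single clean inequality and automatically handles the $K=1$ case as the boundary between the two regimes. I would work over the indices $n=1,2,\ldots,N-K$, since these are exactly the values for which both $P(n)$ and $P(n+1)$ lie among the nonzero probabilities guaranteed by Theorem~\ref{theorem:pmf}.

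First I would substitute the closed form from Theorem~\ref{theorem:pmf}. The common factor $\binom{N}{K}$ cancels, leaving
\[
\frac{P(n+1)}{P(n)} = \frac{\binom{N-n}{K}}{\binom{N-n+1}{K}} \cdot \frac{N-n+1}{N-n}.
\]
The key step is to simplify the ratio of binomial coefficients. Writing them out factorially, or equivalently using the identity $\binom{m}{K} = \frac{m}{m-K}\binom{m-1}{K}$ with $m=N-n+1$, gives $\binom{N-n}{K}/\binom{N-n+1}{K} = (N-n+1-K)/(N-n+1)$. The factor $N-n+1$ then cancels against the second fraction, collapsing the entire ratio to
\[
\frac{P(n+1)}{P(n)} = \frac{N-n-K+1}{N-n}.
\]

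From here the conclusion is immediate: the numerator differs from the denominator by exactly $1-K$, so the ratio is strictly less than $1$ iff $K>1$ and equals $1$ iff $K=1$, which is the claimed dichotomy; in the latter case each value equals $P(1)=K/N=1/N$. I do not expect a genuine obstacle, as the argument rests entirely on the one binomial simplification. The only points requiring care are bookkeeping: verifying that the denominator $N-n$ stays strictly positive over the relevant range (it does, since $n \le N-K$ forces $N-n \ge K \ge 1$), and confirming that the binomial identity is applied within its valid regime so that no coefficient vanishes. Getting that single ratio computation right is therefore the crux of the proof.
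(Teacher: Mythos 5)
Your proposal is correct and takes essentially the same approach as the paper: the paper's proof likewise reduces everything to the ratio of consecutive pmf values, computing $P(n)/P(n+1) = \frac{N-n}{N-n+1-K}$ (the reciprocal of your ratio) and reading off the dichotomy between $K>1$ and $K=1$. If anything, you are slightly more careful than the paper, since you restrict to $n \le N-K$ where both probabilities are nonzero and the ratio is well-defined, whereas the paper's ratio formula degenerates (division by zero) at the endpoint $n = N-K+1$ of its stated range.
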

\begin{proof}
Using Equation~\ref{eq:pmf}, for $n=1,2,\ldots,(N-K+1)$,
\begin{equation}
\frac{P(n)}{P(n+1)} = \frac{N-n}{N-n+1-K}
\end{equation}
from which the result follows.
\end{proof}

\section{Proof of Variance} \label{sec:variance-proof}
Before we prove the expression for variance, we will need two new lemmas.
The first is a generalization of Lemma~\ref{lemma:binom}.
\begin{lemma} \label{lemma:binom1p}
For $x \geq K$,
\begin{displaymath}
\sum_{j=x}^{N} \binom{j}{K} = \binom{N+1}{K+1} - \binom{x}{K+1}.
\end{displaymath}
\end{lemma}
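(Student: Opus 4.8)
The plan is to reduce this statement to the already-proven Lemma~\ref{lemma:binom} rather than running a fresh induction. The key observation is that the partial sum starting at $x$ can be written as the difference of two complete ``hockey-stick'' sums of the type handled by that lemma. Concretely, I would split
\[
\sum_{j=x}^{N}\binom{j}{K} = \sum_{j=K}^{N}\binom{j}{K} - \sum_{j=K}^{x-1}\binom{j}{K},
\]
which is legitimate precisely because $x \geq K$, so that the lower segment is a genuine (possibly empty) initial piece of the full sum.

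Next I would evaluate each piece using Lemma~\ref{lemma:binom}. The full sum gives $\binom{N+1}{K+1}$ directly. For the initial segment, applying Lemma~\ref{lemma:binom} with its upper limit set to $x-1$ in place of $N$ yields $\binom{x}{K+1}$. Subtracting the two leaves exactly $\binom{N+1}{K+1} - \binom{x}{K+1}$, as claimed.

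The only step requiring care is the boundary case $x = K$, where the initial segment $\sum_{j=K}^{x-1}$ is empty and Lemma~\ref{lemma:binom} is not literally applicable, since its upper index $x-1 = K-1$ falls below the starting index $K$. Here I would simply observe that the empty sum equals $0$ and that $\binom{K}{K+1} = 0$ as well, so both pieces agree and the formula remains valid. I do not anticipate any genuine obstacle: this is a standard consequence of the hockey-stick identity, and the sole subtlety is bookkeeping at the lower endpoint.

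As an alternative that sidesteps the case split entirely, I might argue by telescoping: Pascal's rule gives $\binom{j}{K} = \binom{j+1}{K+1} - \binom{j}{K+1}$, and summing this identity over $j = x, \ldots, N$ collapses the sum to $\binom{N+1}{K+1} - \binom{x}{K+1}$ at once. This route handles all $x \geq K$ uniformly and does not invoke Lemma~\ref{lemma:binom} at all, so I would likely prefer it for cleanliness.
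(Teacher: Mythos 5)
Your main argument coincides exactly with the paper's proof: the same decomposition $\sum_{j=x}^{N}\binom{j}{K} = \sum_{j=K}^{N}\binom{j}{K} - \sum_{j=K}^{x-1}\binom{j}{K}$ followed by applying Lemma~\ref{lemma:binom} to both sums. Your handling of the empty-sum boundary case $x=K$ (which the paper glosses over) and the telescoping alternative via Pascal's rule are both correct, but the route you designate as primary is essentially identical to the paper's.
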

\begin{proof}
By writing
\begin{displaymath}
\sum_{j=x}^{N} \binom{j}{K} = \sum_{j=K}^{N} \binom{j}{K} - \sum_{j=K}^{x-1} \binom{j}{K}
\end{displaymath}
and then applying Lemma~\ref{lemma:binom} to both sums.
\end{proof}

\begin{lemma} \label{lemma:binom2}
\begin{displaymath}
\sum_{j=K}^{N} j\binom{j}{K} = N\binom{N+1}{K+1} - \binom{N+1}{K+2}.
\end{displaymath}
\end{lemma}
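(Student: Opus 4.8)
The plan is to reduce this weighted binomial sum to two applications of the hockey-stick identity of Lemma~\ref{lemma:binom}. The only complication relative to that lemma is the extra factor of $j$ in the summand, so the first goal is to absorb that factor into binomial coefficients rather than fight it directly.

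First I would record the absorption identity $(j-K)\binom{j}{K} = (K+1)\binom{j}{K+1}$, which is immediate since both sides equal $j!/\bigl(K!\,(j-K-1)!\bigr)$. Rearranging this splits the summand cleanly:
\[
j\binom{j}{K} = (K+1)\binom{j}{K+1} + K\binom{j}{K}.
\]
Summing over $j = K, \ldots, N$ then turns the target into a linear combination of two hockey-stick sums, each of which Lemma~\ref{lemma:binom} evaluates in closed form.

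Next I would evaluate the two pieces. The second sum is $\sum_{j=K}^{N}\binom{j}{K} = \binom{N+1}{K+1}$ directly from Lemma~\ref{lemma:binom}. For the first, note that $\binom{K}{K+1} = 0$, so the sum effectively starts at $j = K+1$; applying Lemma~\ref{lemma:binom} at upper index $K+1$ gives $\sum_{j=K}^{N}\binom{j}{K+1} = \binom{N+1}{K+2}$. Combining, the left-hand side equals $(K+1)\binom{N+1}{K+2} + K\binom{N+1}{K+1}$.

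The main obstacle is then purely algebraic: reconciling this expression with the stated right-hand side $N\binom{N+1}{K+1} - \binom{N+1}{K+2}$. This amounts to verifying the single absorption relation $(K+2)\binom{N+1}{K+2} = (N-K)\binom{N+1}{K+1}$, after which the two forms coincide. If I wanted to avoid even that reconciliation, I could instead write $j = N - (N-j)$ from the outset: the $N$ term yields $N\binom{N+1}{K+1}$ by Lemma~\ref{lemma:binom}, while reading $(N-j)$ as the count of integers in $\{j+1,\ldots,N\}$ and swapping the order of summation converts $\sum_{j=K}^{N}(N-j)\binom{j}{K}$ into $\sum_{i=K+1}^{N}\binom{i}{K+1} = \binom{N+1}{K+2}$, landing on the stated form immediately. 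I expect the index bookkeeping in that swap, or equivalently the final absorption step, to be the only place demanding care.
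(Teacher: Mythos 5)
Your proof is correct, but it follows a genuinely different route from the paper's. The paper argues by double counting: it views $\sum_{j=K}^{N} j\binom{j}{K}$ as a triangular double sum in which each term $\binom{j}{K}$ is repeated $j$ times, transposes it into $K\sum_{j=K}^{N}\binom{j}{K} + \sum_{i=K+1}^{N}\sum_{j=i}^{N}\binom{j}{K}$, evaluates the tail sums with Lemma~\ref{lemma:binom1p}, and finishes with one more application of Lemma~\ref{lemma:binom}. Your main argument replaces that combinatorial transposition with the algebraic absorption $j\binom{j}{K} = K\binom{j}{K} + (K+1)\binom{j}{K+1}$, which reduces the whole sum to two direct applications of Lemma~\ref{lemma:binom} and bypasses Lemma~\ref{lemma:binom1p} entirely; what this costs you is the closing reconciliation $(K+2)\binom{N+1}{K+2} = (N-K)\binom{N+1}{K+1}$, a one-line factorial check, and all the identities you invoke (the absorption step, $\binom{K}{K+1}=0$, and both hockey-stick evaluations) are indeed correct. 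Your fallback variant, writing $j = N-(N-j)$ and swapping the order of summation in $\sum_{j=K}^{N}(N-j)\binom{j}{K}$, is the closest in spirit to the paper's proof, but by counting the complement it needs only the prefix sums $\sum_{j=K}^{i-1}\binom{j}{K} = \binom{i}{K+1}$ rather than tail sums, so it too avoids Lemma~\ref{lemma:binom1p} and lands on the stated right-hand side with no reconciliation step. Either of your routes is complete, and both are arguably leaner than the paper's, since each relies on Lemma~\ref{lemma:binom} alone rather than on the auxiliary Lemma~\ref{lemma:binom1p}.
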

\begin{proof}
The result is obtained by swapping rows and columns in the summation.
First, consider the summation as a triangle:
\begin{displaymath}
\sum_{j=K}^N j \binom{j}{K} = \sum_{j=K}^N \sum_{\# j} \binom{j}{K}.
\end{displaymath}
Transposing the sum, we get
\begin{displaymath}
K \sum_{j=K}^N  \binom{j}{K} + \sum_{i=K+1}^N \sum_{j=i}^N \binom{i}{K}.
\end{displaymath}
The first sum can be solved using Lemma~\ref{lemma:binom},
and the second using Lemma~\ref{lemma:binom1p}, whereby we obtain
\begin{displaymath}
K \binom{N+1}{K+1} + \sum_{i=K+1}^N \left( \binom{N+1}{K+1} - \binom{i}{K+1} \right).
\end{displaymath}
Using Lemma~\ref{lemma:binom} once more, we obtain the result.
\end{proof}

\begin{proof}[Proof of Variance]
As with expected value, reindex the sum:
\begin{displaymath}
\sum_{n=1}^{N-K+1} n^2 P(n) = \frac{1}{\binom{N}{K}} \sum_{j=K}^{N} (N+1-j)^2 \binom{j-1}{K-1}
\end{displaymath}
which after expanding the square and using $\binom{a}{b} = \frac{a}{b} \binom{a-1}{b-1}$ gives:
\begin{multline}
\frac{1}{\binom{N}{K}} \bigg( (N+1)^2 \sum_{j=K}^{N} \binom{j-1}{K-1} + K \sum_{j=K}^{N} j \binom{j-1}{K-1} \\ - 2K(N+1) \sum_{j=K}^{N} j \binom{j}{K} \bigg).
\end{multline}
After using Lemma~\ref{lemma:binom} and \ref{lemma:binom2}, and simplifying, we obtain the stated expression.
\end{proof}

\section{Proof of Convergence} \label{sec:convergence-proof}
\begin{proof}[Convergence of Distribution]
Set $p=K/N$ and $q=(N-K)/N$. Clearly $K\longrightarrow \infty$ when $N \longrightarrow \infty$.
Note that, for any $n$, we have
\begin{displaymath}
\frac{N-K-n+1}{N-n+1} = 1-\left( \frac{1}{p} + \frac{1-n}{K} \right)^{-1} \longrightarrow q \text{ as } N \longrightarrow \infty.
\end{displaymath}
This implies that
\begin{displaymath}
Fail(n) = \prod_{k=1}^{n} \frac{N-K-k+1}{N-k+1} \longrightarrow q^{n}
\end{displaymath}
so that
\begin{multline}
P(n) = Fail(n-1) \left( 1 - \left(\frac{N-K-n+1}{N-n+1}\right) \right) \\
\longrightarrow q^{n-1} (1-q) \text{ as } N \longrightarrow \infty.
\end{multline}
\end{proof}

\section*{Acknowledgements}
The work described in this paper was supported by a grant from the Research Grants Council of the Hong Kong Special Administrative Region, China [Project No. CityU 125313].

\bibliographystyle{plain}

\bibliography{distributions}

\end{document}